\newtheorem{theorem}{Theorem}
\newtheorem{lemma}{Lemma}
\newtheorem{remark}{Remark}
\theoremstyle{remark}
\newcommand{\re}{\text{\rm Re }}
\newcommand{\im}{\text{\rm Im }}
\begin{document}
\title[L\"owner-Kufarev]{Asymptotic conformal welding via L\"owner-Kufarev evolution}
\author[D.~Prokhorov]{Dmitri Prokhorov}

\subjclass[2010]{Primary 30C35; Secondary 30C80, 30E15} \keywords{L\"owner-Kufarev evolution,
conformal welding, asymptotic expansion, continuous extension}
\address{D.~Prokhorov: Department of Mathematics and Mechanics, Saratov State University, Saratov
410012, Russia} \email{ProkhorovDV@info.sgu.ru}

\begin{abstract}
The L\"owner-Kufarev evolution produces asymptotics for mappings onto domains close to the unit
disk $\mathbb D$ or the exterior of $\mathbb D$. We deduce variational formulae which lead to the
asymptotic conformal welding for such domains. The comparison of mappings onto bounded and
unbounded components of the Jordan curve establishes an asymptotic connection between driving
functions in both versions of the L\"owner-Kufarev equation and conformal radii of the two domains.
\end{abstract}
\maketitle

\section{Introduction}

For the unit disk $\mathbb D=\{z: |z|<1\}$ and the complement $\mathbb D^*=\{z: |z|>1\}$ to the
closure of $\mathbb D$, let $f: \mathbb D\to\Omega$ and $F: \mathbb D^*\to\Omega^*$ be conformal
maps where a domain $\Omega$ is bounded by a closed Jordan curve $\Gamma$, and $\Omega^*$ is the
unbounded complementary component of $\Gamma$. The composition $F^{-1}\circ f$ determines a
homeomorphism of the unit circle $\mathbb T=\partial\mathbb D=\partial\mathbb D^*$ which is called
a conformal welding. Suppose that $0\in\Omega$, $f(0)=0$, $f'(0)>0$, and $F(\infty)=\infty$,
$F'(\infty)>0$. We refer to the works \cite{Bishop}, \cite{Jones}, \cite{Markina}, \cite{Takhtajan}
to confirm the recent interest in the conformal welding problems.

An asymptotic conformal welding for domains close to $\mathbb D$ was proposed by the author
\cite{Prokhorov}. It is based on asymptotic formulas for conformal mappings onto these domains. The
bounded version of $f: \mathbb D\to\Omega$ was obtained by Siryk \cite{Siryk}, see also [5, p.
379], and the unbounded version of $F: \mathbb D^*\to\Omega^*$ is given in \cite{Prokhorov}.

{\bf Theorem A.} \cite{Siryk}, \cite{Prokhorov} {\it For the polar coordinates $(r,\psi)$, let
$\Gamma=\partial\Omega=\partial\Omega^*$ have the polar equation $r=r(\psi)=1-\delta(\psi)$,
$0\leq\psi\leq2\pi$, where $\delta(\psi)$ is twice differentiable and
\begin{equation}
|\delta(\psi)|<\epsilon,\;\;\;|\delta'(\psi)|<\epsilon,\;\;\;|\delta''(\psi)|<\epsilon \label{del}.
\end{equation}
Then a function $f: \mathbb D\to\Omega$, $f(0)=0$, $f'(0)>0$, and a function $F: \mathbb
D^*\to\Omega^*$, $F(\infty)=\infty$, $F'(\infty)>0$, have the asymptotic representations}
\begin{equation}
f(z)=z\left(1-\frac{1}{2\pi}\int_0^{2\pi}\delta(\psi)\frac{e^{i\psi}+z}{e^{i\psi}-z}d\psi \right)+
O(\epsilon^2),\;\;\;|z|<1,\;\;\;\epsilon\to+0, \label{Sir}
\end{equation}

\begin{equation}
F(z)=z\left(1-\frac{1}{2\pi}\int_0^{2\pi} \delta(\psi)\frac{z+e^{i\psi}}{z-e^{i\psi}}d\psi \right)+
O(\epsilon^2),\;\;\;|z|>1,\;\;\;\epsilon\to+0. \label{Pr2}
\end{equation}

{\bf Theorem B.} \cite{Prokhorov} {\it Under the conditions of Theorem A and for
$$h(x)=\frac{1}{2\pi}\int_0^{2\pi}(\delta(\psi)-\delta(x))\cot\frac{\psi-x}{2}d\psi,
\;\;\;x\in[0,2\pi],$$ the conformal welding $\sigma=\sigma(s)$ for the domain $\Omega$ bounded by
$\Gamma=\{f(e^{is}): 0\leq s\leq2\pi\}=\{F(e^{i\sigma}): 0\leq\sigma\leq2\pi\}$ satisfies the
asymptotic relation}
$$s+h(s)=\sigma-h(\sigma)+O(\epsilon^2),\;\;\;s\in[0,2\pi],\;\;\;\epsilon\to+0.$$

From the other side, the L\"owner-Kufarev evolution also can produce asymptotics for mappings onto
domains close to $\Omega$ and $\Omega^*$, e.g., for $\Omega=\mathbb D$. The L\"owner equation
\cite{Loewner} is a differential equation obeyed by a family of continuously varying univalent
functions $f(z,t)$, $f(0,t)=0$, from $\mathbb D$ onto a domain with a slit formed by a continuously
increasing arc. The real parameter $t$ characterizes the length of the arc and can be chosen so
that $f(z,t)=e^{-t}z+\dots$, $t\geq0$. Kufarev \cite{Kufarev} and Pommerenke \cite{Pommerenke}
generalized this idea to a wider class of domains. We present here the "decreasing" version of the
L\"owner-Kufarev evolution, see \cite{Gumenyuk} for details of connection between "decreasing" and
"increasing" cases in the L\"owner-Kufarev theory. Given a chain of domains $\Omega(t)$,
$0\in\Omega(t_2)\subset\Omega(t_1)$, $0\leq t_1<t_2<T$, and functions $w=f(z,t):\mathbb
D\to\Omega(t)$ normalized as above, there exist functions $p(z,t)$, $p(\cdot,t)$ are analytic in
$\mathbb D$, $p(z,\cdot)$ are measurable for $0\leq t<T$, and $p$ are from the Carath\'eodory class
which means that
$$p(z,t)=1+p_1(t)z+p_2(t)z^2+\dots,\;\;\;\re p(z,t)>0,\;\;\;z\in\mathbb D,\;\;\;0\leq t<T,$$ such
that
\begin{equation}
\frac{\partial f(z,t)}{\partial t}=-z\frac{\partial f(z,t)}{\partial z}p(z,t) \label{Loe1}
\end{equation}
for $z\in\mathbb D$ and for almost all $t\in[0,T)$, $T$ may be $\infty$. The corresponding
L\"owner-Kufarev equation for the inverse function $z=f^{-1}(w,t):=g(w,t)$ is
\begin{equation}
\frac{\partial g(w,t)}{\partial t}=g(w,t)p(g(w,t),t),\;\;\;w\in\Omega(t),\;\;\;0\leq t<T.
\label{Loe2}
\end{equation}

In case when $\Omega(t)$ are bounded and $\Omega^*(t)$ is the exterior of $\Omega(t)$, let
$w=F(z,t)$ be the unique conformal map from $\mathbb D^*$ onto $\Omega^*(t)$ such that
$F(\infty,t)=\infty$, $F'(\infty,t)>0$, and let $z=G(w,t)$ be the inverse of $F(z,t)$. Let us
normalize the maps so that $F(z,t)=e^{-\tau(t)}z+b_0(t)+b_1(t)z^{-1}+\dots$ as $z\to\infty$ with a
differentiable real function $\tau=\tau(t)$, $\tau(0)=0$, $\tau'(t)>0$, $t\geq0$. Then $F(z,t)$ and
$G(w,t)$ satisfy the L\"owner-Kufarev equations
\begin{equation}
\frac{\partial F(z,t)}{\partial t}=-z\frac{\partial F(z,t)}{\partial
z}q(z,t)\frac{d\tau(t)}{dt},\;\;\;z\in\mathbb D^*,\;\;\;0\leq t<T, \label{Loe3}
\end{equation}
$$\frac{\partial G(w,t)}{\partial
t}=G(w,t)q(G(w,t),t)\frac{d\tau(t)}{dt},\;\;\;w\in\Omega^*(t),\;\;\;0\leq t<T,$$ where $q(\cdot,t)$
are analytic in $\mathbb D^*$, $q(z,\cdot)$ are measurable for $0\leq t<T$, and
$$q(z,t)=1+\frac{q_1(t)}{z}+\frac{q_2(t)}{z^2}+\dots,\;\;\;\re q(z,t)>0,\;\;\; z\in\mathbb D^*,
\;\;\;0\leq t<T.$$

For continuous functions $p(z,\cdot)$ and $q(z,\cdot)$, immediate consequences of (\ref{Loe1}) and
(\ref{Loe3}) with $\Omega(0)=\mathbb D$ are the asymptotic expansions of solutions $f(z,t)$ of
(\ref{Loe1}) and $F(z,t)$ of (\ref{Loe3}). Indeed, since $$f(z,t)=f(z,0)+\left(\frac{\partial
f(z,t)}{\partial t}\right)_{t=0}t+o(t),\;\;|z|<1,\;\;t\to+0,$$ $$zF\left({1\over
z},t(\tau)\right)=zF\left({1\over z},0\right)+z\left(\frac{\partial
F(1/z,t(\tau))}{\partial\tau}\right)_{\tau=0}\tau+o(\tau),\;\;|z|<1,\;\;\tau\to+0,$$ we deduce from
(\ref{Loe1}) and (\ref{Loe3}) that
\begin{equation}
f(z,t)=z-zp(z,0)t+o(t),\;\;\;|z|<1,\;\;\;t\to+0, \label{asy1}
\end{equation}
\begin{equation}
zF\left({1\over z},t\right)=1-q\left({1\over z},0\right)\tau+o(\tau), \;\;\;|z|<1,\;\;\;\tau\to+0.
\label{asy}
\end{equation}

Both equations (\ref{asy1}) and (\ref{asy}) remain true when $f(\cdot,t)$ has a continuous
extension onto $\overline{\mathbb D}=\mathbb D\cup\mathbb T$ and $F(\cdot,t)$ has a continuous
extension onto $\overline{\mathbb D^*}=\mathbb D^*\cup\mathbb T$.

The main result of the article is contained in the following theorem.
\begin{theorem}
Let the driving function $p(\cdot,t)$ from the Carath\'eodory class in (\ref{Loe1}) be $C^2$ in
$\overline{\mathbb D}$ for $0\leq t<T$, $p(z,\cdot)$ be continuous in $[0,T)$ for
$z\in\overline{\mathbb D}$, $p(z,t)$, $p'(z,t)$ and $p''(z,t)$ be bounded in $\overline{\mathbb
D}\times[0,T)$. Then, for solutions $f(z,t)$ to (\ref{Loe1}) with $\Omega(0)=\mathbb D$,
$\Omega(t)=f(\mathbb D,t)$, $\partial\Omega(t)=\Gamma(t)$, and the corresponding functions
$F(\cdot,\tau(t))$, $F(\mathbb T,\tau(t))=f(\mathbb T,t)$, the conformal welding $\varphi: \mathbb
T\to\mathbb T$ of the curve $\Gamma(t)$, $\varphi=\varphi(\tilde{\varphi})$, satisfies the
following relation
\begin{equation}
\varphi=\tilde{\varphi}+2\;\im p(e^{i\tilde{\varphi}},0)t+o(t),\;\;\;t\to+0. \label{main}
\end{equation}
\end{theorem}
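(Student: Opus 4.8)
The plan is to reduce the assertion to Theorems A and B by exhibiting, for small $t$, the curve $\Gamma(t)=f(\mathbb T,t)$ as one of the nearly circular curves covered there, with an explicit leading term for its polar equation coming from the expansion (\ref{asy1}). First I would promote (\ref{asy1}) to a boundary expansion with control of one derivative. Differentiating (\ref{Loe1}) in $z$ and integrating in $t$ from $0$ (where $f(z,0)=z$, $f_z(z,0)=1$, $f_{zz}(z,0)=0$), the standing hypotheses — $p,p',p''$ bounded on $\overline{\mathbb D}\times[0,T)$ and $p(z,\cdot)$ continuous — propagate by a routine a priori estimate to uniform bounds on $f,f_z,f_{zz}$ near $t=0$ and to
$$f(z,t)=z-zp(z,0)\,t+o(t),\qquad z\in\overline{\mathbb D},\ t\to+0,$$
where the remainder is $o(t)$ in the $C^{1}(\overline{\mathbb D})$ norm, and moreover $\|f(\cdot,t)-\mathrm{id}\|_{C^{2}(\overline{\mathbb D})}=O(t)$; in particular $\Gamma(t)$ is, for small $t$, a star‑shaped $C^{2}$ Jordan curve.

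Next I would read off the polar equation. Writing $p(e^{is},0)=u(s)+iv(s)$ with $u,v$ real and $C^{2}$, the displayed expansion gives that the point of $\Gamma(t)$ with parameter $s$ has modulus $1-u(s)t+o(t)$ and argument $s-v(s)t+o(t)$; solving $\psi=s-v(s)t+o(t)$ for $s$ and substituting, $\Gamma(t)$ has polar equation $r=r(\psi,t)=1-\delta(\psi,t)$ with
$$\delta(\psi,t)=t\,\re p(e^{i\psi},0)+o(t),\qquad \|\delta(\cdot,t)\|_{C^{2}}=O(t),$$
the remainder again $o(t)$ in $C^{1}$ (the tangential shift $-v(s)t$ affects the parameter–angle correspondence but not the radius at a fixed angle to first order). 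Hence the conditions (\ref{del}) of Theorem A hold with $\epsilon=\epsilon(t)=O(t)\to0$, and Theorems A, B apply to $\Gamma(t)$.

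Then I would feed this $\delta$ into Theorem B. Since the welding satisfies $s+h(s)=\sigma-h(\sigma)+O(\epsilon^{2})$ with $\sigma=s+O(t)$ and $h=O(t)$, one gets $\sigma-s=2h(s)+O(t^{2})$, and since $\delta\mapsto h$ is linear and continuous on functions with controlled first derivative (split the singular integral near $\psi=x$), replacing $\delta(\cdot,t)$ by its leading part changes $h(s)$ only by $o(t)$; thus $h(s)=\tfrac{t}{2\pi}\int_0^{2\pi}\bigl(\re p(e^{i\psi},0)-\re p(e^{is},0)\bigr)\cot\frac{\psi-s}{2}\,d\psi+o(t)$. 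Using the boundary identity $\frac{e^{i\psi}+e^{is}}{e^{i\psi}-e^{is}}=-i\cot\frac{\psi-s}{2}$ in the Schwarz representation $p(z,0)=\frac1{2\pi}\int_0^{2\pi}\re p(e^{i\psi},0)\frac{e^{i\psi}+z}{e^{i\psi}-z}\,d\psi+i\,\im p(0,0)$, together with $\im p(0,0)=0$ and $\int_0^{2\pi}\cot\frac{\psi-s}{2}\,d\psi=0$ (principal value), the integral equals $-\im p(e^{is},0)$. Hence $\sigma-s=-2t\,\im p(e^{is},0)+o(t)$; matching parametrizations — $\varphi=s$ the parameter induced by $f$ and $\tilde\varphi=\sigma$ the one induced by $F$, so that $f(e^{i\varphi},t)=F(e^{i\tilde\varphi},\tau(t))$ — and using $\im p(e^{i\sigma},0)=\im p(e^{is},0)+o(1)$, this rearranges to $\varphi=\tilde\varphi+2\,\im p(e^{i\tilde\varphi},0)\,t+o(t)$, i.e.\ (\ref{main}).

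The main obstacle is the first step: legitimizing (\ref{asy1}) uniformly up to $\mathbb T$ in a norm strong enough that the error survives the (singular) conjugation integral defining $h$, that is an $o(t)$ bound in $C^{1}(\overline{\mathbb D})$ on $z\mapsto f(z,t)-z+zp(z,0)t$ rather than merely a pointwise one — which is exactly what the boundedness of $p,p',p''$ buys (it also supplies the $C^{2}$ smallness of $\delta$ needed for Theorem A). Everything after that is the conjugate‑function computation above. One could instead bypass Theorem B and argue directly from (\ref{asy1}) and (\ref{asy}): comparing the polar equations of $\Gamma(t)$ produced by $f$ and by $F$ forces $\tau'(0)=1$ and $\re q(e^{i\psi},0)=\re p(e^{i\psi},0)$, hence $q(e^{i\psi},0)=\overline{p(e^{i\psi},0)}$ on $\mathbb T$, and then $f(e^{i\varphi},t)=F(e^{i\tilde\varphi},\tau(t))$ gives $\varphi-\tilde\varphi=\bigl(\im p-\im q\bigr)(e^{i\tilde\varphi},0)\,t+o(t)=2\,\im p(e^{i\tilde\varphi},0)\,t+o(t)$; this makes the factor $2$ transparent but requires the same boundary regularity.
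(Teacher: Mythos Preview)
Your proposal is correct, and in fact you have sketched \emph{two} valid arguments. Your main route applies Theorem~B directly: once the polar equation $\delta(\psi,t)=t\,\re p(e^{i\psi},0)+o(t)$ with $\|\delta\|_{C^2}=O(t)$ is in hand, Theorem~B gives $\sigma-s=2h(s)+O(t^2)$, and your identification of $h(s)$ with $-t\,\im p(e^{is},0)$ via the boundary Schwarz formula is exactly the harmonic-conjugate step needed. The paper does \emph{not} invoke Theorem~B; instead it proceeds essentially along the lines of your final paragraph. It uses Theorem~A (formula~(\ref{Pr2})) on the exterior side to expand $zF(1/z,\tau)$, reads off from this that $q(1/z,0)=\overline{p(\bar z,0)}$ (hence $\im q=-\im p$ on $\mathbb T$), shows $\tau'(0)=1$ from the welding condition, and then simply equates the two argument expansions $\psi_t=\varphi-\im p(e^{i\varphi},0)\,t+\cdots$ and $\tilde\psi_{\tau}=\tilde\varphi+\im p(e^{i\tilde\varphi},0)\,\tau+\cdots$ at a common boundary point.

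The two routes differ only in packaging. Your Theorem~B route hides the exterior map entirely inside the cited welding formula and reduces everything to a single conjugate-function computation; it is shorter once Theorem~B is granted. The paper's route (your alternative) keeps the two maps $f$ and $F$ visible, extracts the relation $q=\overline{p}$ on $\mathbb T$ as an intermediate byproduct, and makes the origin of the factor $2$ transparent as $\im p-\im q=2\,\im p$. The boundary-regularity input (your ``main obstacle'') is handled in the paper by the chain of Lemmas~1--6, which play exactly the role of your first step: they show $\Gamma(t)$ is star-shaped with $|\delta_t|,|\delta_t'|,|\delta_t''|<ct$, legitimizing the use of Theorem~A (and, in your argument, Theorem~B) with $\epsilon=O(t)$.
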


Theorem 1 is proved in Section 3 while Section 2 prepares auxiliary results for the proof.

\section{Preliminary statements}

Restrict our considerations to Jordan curves $\Gamma(t)=\partial\Omega(t)=\partial\Omega^*(t)$ of
class $C^{2+\alpha}$, $0<\alpha<1$. This allows us to extend $f:\mathbb D\to\Omega(t)$ and its
derivatives $f'$ and $f''$ continuously onto $\overline{\mathbb D}=\mathbb D\cup\mathbb T$ so that
$f'$ does not vanish there, see, e.g., [11, p. 48]. To provide these properties we require that the
driving function $p(\cdot,t)$ in (\ref{Loe1}) generating $f$ is $C^2$ in $\overline{\mathbb D}$.
The following lemma was proved in \cite{Markina} for $C^{\infty}$-curves. We repeat its formulation
and proof for $C^2$-curves.

\begin{lemma}
Let the function $w(z,t)$ be a solution to the Cauchy problem
\begin{equation}
\frac{dw}{dt}=-wp(w,t),\;\;\;w(z,0)=z,\;\;\;z\in\mathbb D. \label{Loe5}
\end{equation}
If the driving function $p(\cdot,t)$, being from the Carath\'eodory class for almost all $t\geq0$,
is $C^2$ in $\overline{\mathbb D}$ and measurable with respect to $t$, then the boundaries of
$w(\mathbb D,t)\subset\mathbb D$ are $C^3$ for all $t>0$.
\end{lemma}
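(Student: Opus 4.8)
The plan is to exploit the fact that the Cauchy problem \eqref{Loe5} is, for each fixed $z$, an ODE in $t$ whose right-hand side $-wp(w,t)$ is smooth in the $w$-variable up to the boundary, and to track how the regularity of the map $z\mapsto w(z,t)$ in the spatial variable evolves. First I would recall that $p(\cdot,t)$ being $C^2$ on $\overline{\mathbb D}$ and from the Carath\'eodory class implies, via the classical boundary regularity theory for conformal maps of the Kellogg–Warschawski type (as cited, [11, p.~48]), that the time-$t$ map $z\mapsto w(z,t)$ extends continuously to $\overline{\mathbb D}$ with nonvanishing derivative, so that $\partial(w(\mathbb D,t))$ is at least a $C^2$ Jordan curve and the only question is the gain of one derivative. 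The key point is that differentiating \eqref{Loe5} in $z$ gives a linear ODE for $w'(z,t):=\partial w/\partial z$,
\begin{equation}
\frac{d}{dt}\,w'(z,t)=-\bigl(p(w,t)+w\,p'(w,t)\bigr)w'(z,t),
\end{equation}
whose coefficient involves $p$ and $p'$ composed with $w$; since $p$ is $C^2$ on $\overline{\mathbb D}$, the coefficient is $C^1$ in the spatial variable, and integrating this linear ODE in $t$ (it has the explicit solution $w'(z,t)=\exp\{-\int_0^t(p(w,s)+w\,p'(w,s))\,ds\}$) shows $w'(z,t)$ inherits one more degree of boundary smoothness than a naive count suggests.

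Next I would make this precise by a bootstrap in $t$. Set up the problem on the boundary: for $z=e^{i\theta}\in\mathbb T$, the curve $\theta\mapsto w(e^{i\theta},t)$ is the boundary of $w(\mathbb D,t)$; its regularity class is governed by the regularity of $\theta\mapsto w(e^{i\theta},t)$ together with the non-vanishing of the tangent. Differentiating the boundary values twice and then a third time in $\theta$, using the chain rule and the ODE \eqref{Loe5}, one expresses $\partial_\theta^3 w(e^{i\theta},t)$ in terms of $\partial_\theta^j w$ for $j\le 3$ and of $p,p',p''$ evaluated at $w$; the terms involving the top-order derivative $\partial_\theta^3 w$ again assemble into a linear ODE in $t$ with a coefficient that is continuous (indeed $C^1$) because $p''$ is continuous on $\overline{\mathbb D}$. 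Gr\"onwall's inequality then keeps $\partial_\theta^3 w(e^{i\theta},t)$ bounded and, with a little more care, continuous jointly in $(\theta,t)$, which is exactly the statement that $\partial(w(\mathbb D,t))$ is $C^3$. A clean way to organize this is to note that the solution operator of \eqref{Loe5} is, for each $t$, a conformal map $\mathbb D\to w(\mathbb D,t)$ generated by a $C^2$ Carath\'eodory driving term, and then invoke an elliptic/conformal boundary-regularity statement: a conformal map whose boundary-value derivative satisfies a first-order linear ODE with $C^1$ coefficients must have boundary of one higher class than the coefficients. I would cite the analogous argument in \cite{Markina} and simply upgrade each "$C^\infty$" there to the finite count "$p\in C^2 \Rightarrow$ curve $\in C^3$," checking that at no step more than two spatial derivatives of $p$ are used.

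The main obstacle I anticipate is the \emph{uniformity in $t$} of the boundary estimates near $t=0$ and, more subtly, the control of the modulus of continuity of the top derivative. The ODE argument easily gives pointwise-in-$t$ boundedness of $\partial_\theta^3 w$, but to conclude the curve is genuinely $C^3$ (not merely that the third derivative exists a.e. and is bounded) one needs $\partial_\theta^3 w(\cdot,t)$ to be continuous in $\theta$, and for this the coefficient in the linear ODE — which contains $p''(w(e^{i\theta},t),t)$ — must itself be continuous in $\theta$, i.e.\ one really does need the full hypothesis $p\in C^2(\overline{\mathbb D})$ and not just $p\in C^{1,1}$. I would therefore be careful to track that the composition $\theta\mapsto p''(w(e^{i\theta},t),t)$ is continuous, which follows from continuity of $p''$ on $\overline{\mathbb D}$ together with the already-established $C^2$ (hence continuous) dependence of $w(e^{i\theta},t)$ on $\theta$; the measurability in $t$ causes no trouble since all $t$-integrations are of bounded functions over finite intervals. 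Once continuity of the coefficient is in hand, the explicit exponential solution formula for the linear ODE delivers continuity of $\partial_\theta^3 w$ and the lemma follows.
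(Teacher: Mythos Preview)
Your proposal is essentially the paper's proof: differentiate \eqref{Loe5} in the spatial variable to obtain the explicit formula $\log w'=-\int_0^t\bigl(p(w,\tau)+w\,p'(w,\tau)\bigr)\,d\tau$, use it to bootstrap $w$ from $C^1$ to $C^2$ on $\overline{\mathbb D}$, and then repeat with the analogous closed formula for $w''$ (involving $p''$) to reach $C^3$; the paper carries this out in four lines, working with $z$-derivatives rather than $\theta$-derivatives and without Gr\"onwall, which is redundant once you have the explicit exponential solution you already wrote down. One correction: your opening appeal to Kellogg--Warschawski is circular---that theorem deduces regularity of the map from regularity of the boundary, whereas the boundary regularity is precisely the unknown here---so the starting $C^1$ extension of $w$ to $\overline{\mathbb D}$ should come, as in the paper, directly from smooth dependence of ODE solutions on initial data.
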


\begin{proof}
Let $w(z,t)$ be a solution to the Cauchy problem (\ref{Loe5}). It is unique and of class $C^1$ on
$\overline{\mathbb D}$. Differentiate (\ref{Loe5}) inside $\mathbb D$ with respect to $z$ and write
$$\log w'=-\int_0^t(p(w(z,\tau),\tau)+w(z,\tau)p'(w(z,\tau),\tau))d\tau,\;\;\;\log w'(0,t)=-t.$$
The right-hand side is extendable continuously on $\overline{\mathbb D}$. Therefore, $w'$ is $C^1$
and $w$ is $C^2$ on $\overline{\mathbb D}$. Continue analogously and write the formula
$$w''=-w'\int_0^t(2w'(z,\tau)p'(w(z,\tau),\tau)+w(z,\tau)w'(z,\tau)p''(w(z,\tau)\tau))d\tau,$$
which guarantees that $w$ is $C^3$ on $\overline{\mathbb D}$ and completes the proof.
\end{proof}

\begin{lemma}
Let the driving function $p(\cdot,t)$ be from the Carath\'eodory class for almost all $t\in[0,T)$,
$C^2$ in $\overline{\mathbb D}$ and measurable with respect to $t$. Then, for $\Omega(0)=\mathbb
D$, a solution $f(z,t)$ to (\ref{Loe1}) is $C^3$ in $\overline{\mathbb D}$ for all $t\in[0,T)$.
\end{lemma}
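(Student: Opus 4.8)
The plan is to relate the solution $f(z,t)$ of the Löwner–Kufarev PDE (\ref{Loe1}) to the solution $w(z,t)$ of the Cauchy problem (\ref{Loe5}) from Lemma 1, and then transfer the $C^3$-regularity established there. First I would recall the standard fact that the flow map associated to (\ref{Loe1}) is exactly the inverse of $f$: if $w(z,t)$ solves (\ref{Loe5}) with $w(z,0)=z$, then the family of domains $w(\mathbb{D},t)$ decreases, and $f(z,t)$ is characterized by the identity $f(w(z,t),t)=z$ for $z\in\mathbb{D}$; equivalently $w(\cdot,t)=g(\cdot,t)=f^{-1}(\cdot,t)$ in the notation of (\ref{Loe2}). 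This is the usual correspondence between the PDE for $f$ and the ODE for its inverse, and it holds because $\Omega(0)=\mathbb{D}$, so $f(z,0)=z$ and $w(z,0)=z$ agree. By Lemma 1 the boundary of $\Omega(t)=w(\mathbb{D},t)^{c}$-type image — more precisely the boundary $\partial w(\mathbb{D},t)$ — is a $C^3$ Jordan curve for each $t>0$, and for $t=0$ it is $\mathbb{T}$ itself, which is $C^\infty$.

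Next I would invoke the boundary regularity theory for conformal maps. Since $\Omega(t)=f(\mathbb{D},t)$ and, under the identification above, $\partial\Omega(t)=\partial w(\mathbb{D},t)$ is a $C^3$ (hence $C^{2+\alpha}$ for any $\alpha<1$) Jordan curve, the Kellogg–Warschawski theorem (cited in the excerpt as [11, p.~48]) guarantees that the Riemann map $f(\cdot,t):\mathbb{D}\to\Omega(t)$ extends to a $C^3$ function on $\overline{\mathbb{D}}$ with $f'(\cdot,t)$ non-vanishing on $\mathbb{T}$. This already gives the conclusion for each fixed $t\in(0,T)$, and the case $t=0$ is trivial since $f(z,0)=z$. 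One subtle point to address here is that the theorem as usually stated requires the curve to be Jordan (no self-intersections); I would note that the Löwner–Kufarev evolution with a Carathéodory-class driving term produces a decreasing chain of simply connected domains, so each $\Omega(t)$ is a genuine Jordan domain with $C^3$ boundary, and $f(\cdot,t)$ is a genuine conformal map onto it.

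Alternatively — and perhaps more in the spirit of the direct computation used in Lemma 1 — one can argue entirely from the PDE: write $\log f'(z,t)$ by integrating $\partial_t \log f'(z,t) = -p(w(z,t)\cdots)$ type relations obtained by differentiating (\ref{Loe1}) in $z$, transported along the characteristics, and observe that the right-hand side extends continuously (indeed $C^1$, then $C^2$) to $\overline{\mathbb{D}}$ by the boundedness hypotheses on $p,p',p''$ together with the $C^2$-regularity of $w(\cdot,t)$ from Lemma 1. This mirrors the bootstrap $f\in C^1 \Rightarrow f\in C^2 \Rightarrow f\in C^3$ carried out there, now applied to $f$ rather than to $w$, using that $f$ and $w$ are mutually inverse and that the inverse of a $C^3$ diffeomorphism with non-vanishing derivative is again $C^3$.

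The main obstacle I anticipate is bookkeeping the identification $w(\cdot,t)=f^{-1}(\cdot,t)$ carefully: one must check that the ODE flow (\ref{Loe5}) stays inside $\mathbb{D}$ for all $t$, that it indeed inverts the PDE solution $f$ rather than merely being related to it, and that the boundary $\partial w(\mathbb{D},t)$ coincides with $\partial\Omega(t)$ as a set (so that Kellogg–Warschawski applies to the right curve). Once this identification is nailed down, the regularity transfer is immediate from Lemma 1 plus standard conformal-map boundary theory, and the boundedness assumptions on $p$ and its derivatives are exactly what is needed to make the integrals in the Lemma 1 bootstrap converge and extend continuously to the closed disk uniformly in $t$ on compact subintervals of $[0,T)$.
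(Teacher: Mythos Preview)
Your central identification $w(\cdot,t)=f^{-1}(\cdot,t)$ is incorrect, and this is not a bookkeeping issue but a genuine sign error. The inverse $g=f^{-1}$ satisfies (\ref{Loe2}), namely $\partial_t g = +\,g\,p(g,t)$, whereas the flow $w$ in (\ref{Loe5}) satisfies $\partial_t w = -\,w\,p(w,t)$. For time-dependent $p$ these two flows are not inverses of one another, so neither $w(\cdot,t)=g(\cdot,t)$ nor the weaker statement $w(\mathbb D,t)=\Omega(t)$ holds in general. (They do coincide when $p$ is autonomous, which may be the source of the confusion.) Consequently your appeal to Kellogg--Warschawski is applied to the wrong curve, and the alternative bootstrap you sketch for $f$ cannot be carried out as written: differentiating (\ref{Loe1}) in $z$ produces a PDE involving $f_{zz}$, not a closed ODE that integrates to a formula for $\log f'$.

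The paper supplies exactly the missing ingredient: a time-reversal of the driving function. Fix $s\in[0,T)$ and set $h(\zeta,t):=g(g^{-1}(\zeta,s),s-t)$ for $0\le t\le s$. Then $h(\zeta,0)=\zeta$ and a direct computation using (\ref{Loe2}) gives $\partial_t h=-h\,p(h,s-t)$, so $h$ solves the Cauchy problem (\ref{Loe5}) with the modified (but still admissible) driving function $\tilde p(w,t)=p(w,s-t)$. At $t=s$ one has $h(\zeta,s)=g(g^{-1}(\zeta,s),0)=g^{-1}(\zeta,s)=f(\zeta,s)$. Now the \emph{proof} of Lemma~1 (which actually shows that the flow map itself is $C^3$ on $\overline{\mathbb D}$, not merely that its image has $C^3$ boundary) applies directly to $h$ and yields $f(\cdot,s)\in C^3(\overline{\mathbb D})$, with no need for Kellogg--Warschawski or for inverting a diffeomorphism on a variable domain.
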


\begin{proof}
Let $g(w,t)$ be a solution to (\ref{Loe2}). Choose an arbitrary $s\in[0,T)$ and set
$h(\zeta,t):=g(g^{-1}(\zeta,s),s-t)$, $\zeta\in\mathbb D$, $0\leq t\leq s$. Then $h(\zeta,s)$ is a
solution to the Cauchy problem (\ref{Loe5}) with $p(w,s-t)$ in its right-hand side. By Lemma 1,
$h(\zeta,s)=g^{-1}(\zeta,s)=f(\zeta,s)$ is continuously extendable onto $\overline{\mathbb D}$ and
it is $C^3$ in $\overline{\mathbb D}$ for all $s\in[0,T)$. Take into account that $s$ is
arbitrarily chosen and complete the proof.
\end{proof}

Lemmas 1-2 suppose that $p(\cdot,t)$ is $C^2$ in $\overline{\mathbb D}$. Now we compel continuity
of $p(z,\cdot)$ in (\ref{Loe1}) for $0\leq t<T$ instead of measurability. So the curve
$\Gamma=\Gamma(t)=\partial\Omega(t)$ in Theorem A satisfies the first condition in (\ref{del}) with
$\epsilon=ct$ provided $p(z,0)$ is bounded and $\Gamma(t)$ has the polar equation $r=r(\psi)$,
$0\leq\psi\leq2\pi$. The following lemma provides the latter polar representation.

\begin{lemma}
Let the driving function $p(\cdot,t)$ from the Carath\'eodory class in (\ref{Loe1}) be $C^2$ in
$\overline{\mathbb D}$ for $0\leq t<T$, $p(z,\cdot)$ be continuous in $[0,T)$ for
$z\in\overline{\mathbb D}$ and $p(z ,t)$ and $p'(z,t)$ be bounded in $\overline{\mathbb
D}\times[0,T)$. Then solutions $f(z,t)$ to (\ref{Loe1}) with $\Omega(0)=\mathbb D$ map $\mathbb D$
onto $\Omega(t)$ bounded by $\Gamma(t)$ so that, for $t>0$ small enough and polar coordinates
$(r_t,\psi_t)$, $\Gamma(t)$ has a polar equation $r_t=r_t(\psi_t)$, $0\leq\psi\leq2\pi$.
\end{lemma}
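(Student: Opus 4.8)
The plan is to show that for small $t$ the curve $\Gamma(t)=f(\mathbb T,t)$ is a small $C^2$-perturbation of $\mathbb T$ and, in particular, that it is starlike with respect to the origin, so that it admits a single-valued polar equation $r_t=r_t(\psi_t)$. The starting point is the asymptotic expansion \eqref{asy1}, which gives $f(e^{is},t)=e^{is}-e^{is}p(e^{is},0)t+o(t)$ uniformly in $s\in[0,2\pi]$, together with the analogous expansions for $f'$ and $f''$ obtained from Lemma 2 (the representations for $\log w'$ and $w''$ in the proof of Lemma 1, restricted to $\mathbb T$ and rewritten for $f$ via the time-reversal trick of Lemma 2). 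Since $p$, $p'$, $p''$ are bounded on $\overline{\mathbb D}\times[0,T)$ and $p(z,\cdot)$ is continuous, all the error terms are $O(t)$ uniformly on $\mathbb T$; hence
\begin{equation}
|f(e^{is},t)-e^{is}|\leq Ct,\quad |\tfrac{d}{ds}f(e^{is},t)-ie^{is}|\leq Ct,\quad \Big|\tfrac{d^2}{ds^2}f(e^{is},t)+e^{is}\Big|\leq Ct
\label{boundsG}
\end{equation}
for $0\le t<t_0$ with a constant $C$ depending only on the bounds on $p,p',p''$.

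Next I would convert \eqref{boundsG} into starlikeness. Writing $f(e^{is},t)=\rho(s,t)e^{i\theta(s,t)}$, the first bound in \eqref{boundsG} shows $\rho(s,t)=1+O(t)$ is bounded away from $0$, and the first and second bounds show $\theta(s,t)$ is $C^1$ in $s$ with $\partial\theta/\partial s=1+O(t)>0$ for $t$ small. Concretely, $\theta(s,t)=\im\log f(e^{is},t)$ and $\partial\theta/\partial s=\re\big(ie^{is}f'(e^{is},t)/f(e^{is},t)\big)$; plugging in the expansions gives $\partial\theta/\partial s=1+O(t)$. Therefore $s\mapsto\theta(s,t)$ is a strictly increasing $C^1$ map of $[0,2\pi]$ onto an interval of length $2\pi$ (it increases by exactly $2\pi$ since $f(\cdot,t)$ is a homeomorphism of $\mathbb T$ of degree one), hence a homeomorphism; denote its inverse $s=s(\psi_t,t)$. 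Then $r_t(\psi_t):=\rho(s(\psi_t,t),t)$ is the desired single-valued polar equation, and it is $C^2$ in $\psi_t$ because $\rho$ and $\theta$ are $C^2$ in $s$ by Lemma 2 and $\partial\theta/\partial s\ne0$ (inverse function theorem).

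The main obstacle is purely a bookkeeping one: making the passage from the interior expansion \eqref{asy1} and the interior formulas of Lemma 1 to \emph{uniform boundary} estimates on $f$, $f'$, $f''$ along $\mathbb T$, including control of the $o(t)$ remainders uniformly in $s$. This requires invoking the time-reversal identity $f(\zeta,s)=g(g^{-1}(\zeta,s),0)$ used in Lemma 2 so that the integral representations of the proof of Lemma 1 apply to $f(\cdot,s)$, and then using continuity of $p(z,\cdot)$ at $t=0$ together with the uniform bounds on $p,p',p''$ to see that each integrand is $O(1)$ and each integral $O(t)$. Once \eqref{boundsG} is in hand the remaining argument — $\rho$ bounded below, $\theta$ strictly monotone, apply the inverse function theorem — is routine. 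I would close by remarking that $\Gamma(t)$ is in fact $C^2$-close to $\mathbb T$, so that $\delta(\psi_t):=1-r_t(\psi_t)$ satisfies $|\delta|,|\delta'|,|\delta''|<\epsilon$ with $\epsilon=ct$, which is exactly what is needed to feed $\Gamma(t)$ into Theorem A in the next section.
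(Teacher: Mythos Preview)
Your approach is essentially the paper's: both use the time-reversal $h(\zeta,t)=g(g^{-1}(\zeta,s),s-t)$ to obtain integral representations valid on $\overline{\mathbb D}$, and both conclude by showing the starlikeness condition $\re\!\big(e^{is}f'(e^{is},t)/f(e^{is},t)\big)>0$ (equivalently $|\arg(zf'/f)|<\pi/2$) for small $t$. The paper is slightly more direct---it writes the single formula $\arg(\zeta h'(\zeta,s)/h(\zeta,s))=-\im\int_0^s h\,p'(h,s-t)\,dt$ and bounds it by $c_1 s$ in one stroke, rather than bounding $f$ and $f'$ separately---and note a small slip in your display: $\partial\theta/\partial s=\im\!\big(ie^{is}f'/f\big)=\re\!\big(e^{is}f'/f\big)$, not $\re\!\big(ie^{is}f'/f\big)$.
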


\begin{proof}
The curve $\Gamma(t)=\partial\Omega(t)$  has a polar equation $r_t=r_t(\psi_t)$,
$0\leq\psi\leq2\pi$, if $$\left|\arg\frac{zf'(z,t)}{f(z,t)}\right|\leq\frac{\pi}{2}-\delta,
\;\;\;\delta>0,\;\;\;z\in\overline{\mathbb D}.$$ As in the proof of Lemma 2, choose an arbitrary
$s\in[0,T)$ and set $h(\zeta,t):=g(g^{-1}(\zeta,s),s-t)$, $\zeta\in\mathbb D$, $0\leq t\leq s$,
where $g(w,t)$ is a solution to (\ref{Loe2}). Then $h(\zeta,s)=f(\zeta,s)$ is a solution to the
Cauchy problem (\ref{Loe5}) with $p(w,s-t)$ in its right-hand side. Elementary operations lead us
to the formula $$\arg\frac{\zeta
h'(\zeta,s)}{h(\zeta,s)}=-\im\int_0^sh(\zeta,t)p'(h(\zeta,t),s-t)dt, \;\;\;\zeta\in\mathbb D.$$
Extend $f(\zeta,s)$ and $f'(\zeta,s)$ continuously onto $\overline{\mathbb D}$. The latter formula
implies that $|\arg(\zeta f'(\zeta,s)/f(\zeta,s))|$ is less than $\pi/2$ for
$\zeta\in\overline{\mathbb D}$ and $s$ small enough and completes the proof.
\end{proof}

The proof of Lemma 3 implies that, for $t>0$ small enough,
$$\left|\arg\frac{zf'(z,t)}{f(z,t)}\right|<c_1t,\;\;\;z\in\overline{\mathbb D}.$$

\begin{lemma}
Let the driving function $p(\cdot,t)$ from the Carath\'eodory class in (\ref{Loe1}) be $C^2$ in
$\overline{\mathbb D}$ for $0\leq t<T$, $p(z,\cdot)$ be continuous in $[0,T)$ for
$z\in\overline{\mathbb D}$ and $p(z,t)$ and $p'(z,t)$ be bounded in $\overline{\mathbb
D}\times[0,T)$. Then solutions $f(z,t)$ to (\ref{Loe1}) with $\Omega(0)=\mathbb D$ map $\mathbb D$
onto $\Omega(t)$ bounded by $\Gamma(t)$ so that, for $t>0$ small enough and polar coordinates
$(r_t,\psi_t)$, $\Gamma(t)$ has a polar equation $r_t=r_t(\psi_t):=1-\delta_t(\psi_t)$,
$0\leq\psi\leq2\pi$, where $|\delta_t(\psi_t)|<c_2t$.
\end{lemma}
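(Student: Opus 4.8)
The plan is to combine the polar representation already furnished by Lemma 3 with the integral form of the solution to the decreasing L\"owner--Kufarev flow that was used in the proofs of Lemmas 2 and 3. By Lemma 3, for $t>0$ small enough the curve $\Gamma(t)$ is star-shaped about the origin, so it does carry a polar equation $r_t=r_t(\psi_t)$; writing $r_t(\psi_t)=1-\delta_t(\psi_t)$, the whole statement reduces to the \emph{uniform} estimate $0\le\delta_t(\psi_t)\le c_2 t$ for all $\psi_t\in[0,2\pi]$.

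First I would fix $s\in[0,T)$ small and, exactly as in the proofs of Lemmas 2--3, set $h(\zeta,t):=g(g^{-1}(\zeta,s),s-t)$, so that $h(\zeta,\cdot)$ solves the Cauchy problem (\ref{Loe5}) with $p(w,s-t)$ on the right-hand side, with $h(\zeta,s)=f(\zeta,s)$ and $h(\zeta,0)=\zeta$. Dividing (\ref{Loe5}) by $w$ and integrating in $t$ from $0$ to $s$ gives
$$\log f(\zeta,s)=\log\zeta-\int_0^s p\bigl(h(\zeta,t),s-t\bigr)\,dt,\qquad\zeta\in\mathbb D,$$
and, since $f(\cdot,s)$ and hence $h(\cdot,t)$ extend continuously onto $\overline{\mathbb D}$, this identity persists on $\mathbb T$. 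Putting $\zeta=e^{i\psi}$ and taking real parts,
$$\log\bigl|f(e^{i\psi},s)\bigr|=-\int_0^s\re p\bigl(h(e^{i\psi},t),s-t\bigr)\,dt.$$

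Now I would invoke the two hypotheses on $p$. Since $p(\cdot,t)$ is from the Carath\'eodory class and continuous on $\overline{\mathbb D}$, one has $\re p\ge0$ there, so the right-hand side is $\le0$ and $|f(e^{i\psi},s)|\le1$ (equivalently $\Omega(s)\subset\mathbb D$, i.e.\ $\delta_s\ge0$). Since $|p(z,t)|\le M$ on $\overline{\mathbb D}\times[0,T)$, the right-hand side is also bounded below by $-Ms$, whence $|f(e^{i\psi},s)|\ge e^{-Ms}\ge 1-Ms$ for $s$ small. Writing $f(e^{i\psi},s)=r_s e^{i\psi_s}$ in polar form, where $\psi\mapsto\psi_s$ is a bijection of $[0,2\pi]$ by the star-shapedness established in Lemma 3, we get $r_s(\psi_s)=|f(e^{i\psi},s)|$, and therefore $0\le\delta_s(\psi_s)=1-|f(e^{i\psi},s)|\le Ms$ for every $\psi_s\in[0,2\pi]$. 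Taking $c_2=M$ (or any larger constant) and recalling that $s\in[0,T)$ was arbitrary small completes the proof.

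The argument is essentially routine once Lemma 3 is granted; the only point requiring care is the uniformity of the estimate in the angle, which is exactly why I would rely on the integral representation of $\log|f(e^{i\psi},s)|$ — it delivers a bound uniform in $\psi$ directly — rather than on the pointwise expansion (\ref{asy1}), whose remainder would otherwise have to be controlled uniformly over $\overline{\mathbb D}$. One should also keep track of the distinction between the boundary parameter $e^{i\psi}\in\mathbb T$ and the polar angle $\psi_s$ of the image point $f(e^{i\psi},s)\in\Gamma(s)$, the passage between the two being legitimate precisely because $\Gamma(s)$ is star-shaped about $0$.
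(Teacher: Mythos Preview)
Your argument is correct and follows essentially the same route as the paper: invoke Lemma~3 for the polar representation, introduce the auxiliary flow $h(\zeta,t)=g(g^{-1}(\zeta,s),s-t)$, integrate (\ref{Loe5}) to obtain $|f(\zeta,s)|=\exp\{-\int_0^s\re p(h(\zeta,t),s-t)\,dt\}$, and bound the integrand by the assumed uniform bound on $p$. Your version is slightly more explicit than the paper's in recording the lower bound $\delta_s\ge0$ and in distinguishing the boundary parameter $\psi$ from the polar angle $\psi_s$, but the substance is identical.
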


\begin{proof}
The curve $\Gamma(t)=\partial\Omega(t)$  has a polar equation $r_t=1-\delta_t(\psi_t)$,
$0\leq\psi\leq2\pi$, for $t>0$ small enough. For an arbitrary $s\in[0,T)$, set
$h(\zeta,t):=g(g^{-1}(\zeta,s),s-t)$, $\zeta\in\mathbb D$, $0\leq t\leq s$, where $g(w,t)$ is a
solution to (\ref{Loe2}). Integrate (\ref{Loe5}) for $s>0$ small enough and for $\zeta\in\mathbb T$
and obtain
$$|h(\zeta,s)|=\exp\left\{-\re\int_0^sp(h(\zeta,t),s-t)dt\right\}>
1-\re\int_0^sp(h(\zeta,t),s-t)dt>1-c_2s$$ which completes the proof when $h(\zeta,s)=f(\zeta,s)$ is
continuously extended onto $\overline{\mathbb D}$.
\end{proof}

Now we have to obtain the second condition $|\delta'(\psi)|<\epsilon$ in (\ref{del}).

\begin{lemma}
Let the driving function $p(\cdot,t)$ from the Carath\'eodory class in (\ref{Loe1}) be $C^2$ in
$\overline{\mathbb D}$ for $0\leq t<T$, $p(z,\cdot)$ be continuous in $[0,T)$ for
$z\in\overline{\mathbb D}$ and $p(z,t)$ and $p'(z,t)$ be bounded in $\overline{\mathbb
D}\times[0,T)$. Then solutions $f(z,t)$ to (\ref{Loe1}) with $\Omega(0)=\mathbb D$ map $\mathbb D$
onto $\Omega(t)$ bounded by $\Gamma(t)$ so that, for $t>0$ small enough and polar coordinates
$(r_t,\psi_t)$, $\Gamma(t)$ has a polar equation $r_t=r_t(\psi_t)=1-\delta_t(\psi_t)$,
$0\leq\psi\leq2\pi$, such that $|\delta'_t(\psi_t)|<c_3t$.
\end{lemma}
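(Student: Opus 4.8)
The plan is to parametrize $\Gamma(s)$ (writing $s$ for the $t$ of the statement, as in the proof of Lemma~4) by the boundary correspondence $\theta\mapsto f(e^{i\theta},s)$ and to extract $\delta_s'$ from the $\theta$-derivatives of $r_s(\theta):=|f(e^{i\theta},s)|$ and $\psi_s(\theta):=\arg f(e^{i\theta},s)$. By Lemmas~1--2 the map $f(\cdot,s)$ extends to $\overline{\mathbb D}$ as a $C^3$ function with $f'\neq0$ on $\mathbb T$, and by Lemma~3 the curve $\Gamma(s)$ is starlike about the origin, so $\psi_s$ is a $C^3$-diffeomorphism of $[0,2\pi]$ onto itself. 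Since $\delta_s(\psi_s(\theta))=1-r_s(\theta)$, the chain rule gives $\delta_s'(\psi_s)=-r_s'(\theta)/\psi_s'(\theta)$, so it suffices to prove $r_s'(\theta)=O(s)$ and $\psi_s'(\theta)=1+O(s)$ uniformly in $\theta$ as $s\to+0$.

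From $\frac{d}{d\theta}\log f(e^{i\theta},s)=i\,e^{i\theta}f'(e^{i\theta},s)/f(e^{i\theta},s)$ together with $\log f=\log r_s+i\psi_s$ one reads off
$$\frac{1}{r_s}\frac{dr_s}{d\theta}=-\im\frac{e^{i\theta}f'(e^{i\theta},s)}{f(e^{i\theta},s)},\qquad
\frac{d\psi_s}{d\theta}=\re\frac{e^{i\theta}f'(e^{i\theta},s)}{f(e^{i\theta},s)},$$
so everything reduces to estimating $\zeta f'(\zeta,s)/f(\zeta,s)$ on $\mathbb T$. As in the proof of Lemma~3, put $h(\zeta,t):=g(g^{-1}(\zeta,s),s-t)$, so that $h(\zeta,s)=f(\zeta,s)$ solves (\ref{Loe5}) with $p(w,s-t)$ on the right; the same elementary operations, kept at the level of the complex logarithm rather than only its imaginary part, yield
$$\log\frac{\zeta f'(\zeta,s)}{f(\zeta,s)}=-\int_0^s h(\zeta,t)\,p'(h(\zeta,t),s-t)\,dt,\qquad\zeta\in\overline{\mathbb D}.$$
Because $|h(\zeta,t)|\le1$ and $p'$ is bounded on $\overline{\mathbb D}\times[0,T)$, the right-hand side is $O(s)$ uniformly in $\zeta\in\overline{\mathbb D}$, whence $\zeta f'(\zeta,s)/f(\zeta,s)=1+O(s)$ there.

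Substituting this back, $\psi_s'(\theta)=\re(1+O(s))=1+O(s)$ and $r_s^{-1}\,dr_s/d\theta=-\im(1+O(s))=O(s)$; combined with $r_s=1+O(s)$ from Lemma~4 this gives $r_s'(\theta)=O(s)$, and therefore $|\delta_s'(\psi_s)|=O(s)<c_3 s$, which is the assertion once $h(\zeta,s)=f(\zeta,s)$ is continuously extended onto $\overline{\mathbb D}$. I expect the only points genuinely requiring care to be the legitimacy of the change of variables $\theta\leftrightarrow\psi_s$ --- supplied by the starlikeness in Lemma~3, which makes $\psi_s'>0$ --- and the passage from the pointwise identity to the uniform $O(s)$ bound on the closed disk, which rests on the boundary regularity of $f(\cdot,s)$ established in Lemmas~1--2; the remaining computations are routine.
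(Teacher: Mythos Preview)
Your argument is correct and coincides with the paper's: the paper derives the same identity $\delta_t'(\psi_t)=|f(e^{i\varphi},t)|\tan\arg\bigl(e^{i\varphi}f'(e^{i\varphi},t)/f(e^{i\varphi},t)\bigr)$, which is exactly your $-r_s'/\psi_s'$ rewritten, and then invokes the estimate $|\arg(zf'(z,t)/f(z,t))|<c_1 t$ obtained (as you do) from the integral formula in the proof of Lemma~3. The only cosmetic difference is that the paper quotes the bound on $\arg(zf'/f)$ directly from the remark after Lemma~3, whereas you rederive the full complex version $\log(\zeta f'/f)=-\int_0^s h\,p'\,dt$; either way the conclusion is immediate.
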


\begin{proof}
Lemma 3 implies that $\Gamma(t)$ has a polar equation $r_t=1-\delta_t(\psi_t)$. Elementary
reasonings lead to the formula
$$\delta'_t(\psi_t)=
|f(e^{i\varphi},t)|\tan\arg\frac{e^{i\varphi}f'(e^{i\varphi},t)}{f(e^{i\varphi},t)},$$ where
$f(\cdot,t)$ and $f'(\cdot,t)$ are extended continuously onto $\overline{\mathbb D}$ and $\arg
f(e^{i\varphi},t)=\psi_t$. As in the proof of Lemmas 3 and 4, show that
$$|\tan\arg\frac{zf'(z,t)}{f(z,t)}|<\tan(c_1t)<c_3t,\;\;\;z=e^{i\varphi},$$ which completes
the proof.
\end{proof}

Finally, we have to provide the third condition $|\delta''(\psi)|<\epsilon$ in (\ref{del}).

\begin{lemma}
Let the driving function $p(\cdot,t)$ from the Carath\'eodory class in (\ref{Loe1}) be $C^2$ in
$\overline{\mathbb D}$ for $0\leq t<T$, $p(z,\cdot)$ be continuous in $[0,T)$ for
$z\in\overline{\mathbb D}$, $p(z,t)$, $p'(z,t)$ and $p''(z,t)$ be bounded in $\overline{\mathbb
D}\times[0,T)$. Then solutions $f(z,t)$ to (\ref{Loe1}) with $\Omega(0)=\mathbb D$ map $\mathbb D$
onto $\Omega(t)$ bounded by $\Gamma(t)$ so that, for $t>0$ small enough and polar coordinates
$(r_t,\psi_t)$, $\Gamma(t)$ has a polar equation $r_t=r_t(\psi_t)=1-\delta_t(\psi_t)$,
$0\leq\psi\leq2\pi$, such that $|\delta''_t(\psi_t)|<c_4t$.
\end{lemma}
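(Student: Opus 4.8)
The plan is to follow the same scheme that produced the bounds on $\delta_t$ and $\delta'_t$ in Lemmas 4 and 5, pushing it one derivative further. As before, fix an arbitrary $s\in[0,T)$ and set $h(\zeta,t):=g(g^{-1}(\zeta,s),s-t)$, so that $h(\zeta,s)=f(\zeta,s)$ solves the Cauchy problem (\ref{Loe5}) with $p(w,s-t)$ in its right-hand side; Lemma 2 guarantees $f(\cdot,s)$ is $C^3$ up to $\overline{\mathbb D}$, and Lemma 1 applied to $w(\zeta,t)$ gives the explicit integral formulae for $w'$ and $w''$ in terms of $p$, $p'$, $p''$ along the flow. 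The first step is to express $\delta''_t(\psi_t)$ through the boundary values of $f$, $f'$, $f''$: differentiating the relation $\delta'_t(\psi_t)=|f(e^{i\varphi},t)|\tan\arg\bigl(e^{i\varphi}f'(e^{i\varphi},t)/f(e^{i\varphi},t)\bigr)$ from the proof of Lemma 5 once more with respect to $\psi_t$ (using $d\varphi/d\psi_t=1+O(t)$, which follows from $\arg f(e^{i\varphi},t)=\psi_t$ and $|\arg(zf'/f)|<c_1t$). This produces a formula for $\delta''_t$ as a combination of $|f|$, the already-controlled quantity $\arg(zf'/f)$, and the new quantity $\partial_\varphi\arg(e^{i\varphi}f'(e^{i\varphi},t)/f(e^{i\varphi},t))$, which is essentially $\im\bigl(1+e^{i\varphi}f''/f' - e^{i\varphi}f'/f\bigr)$.

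The second step is to bound this new quantity by $c t$. Using the integral representations from Lemma 1, write, for $\zeta\in\mathbb T$,
\begin{equation}
\frac{\zeta h''(\zeta,s)}{h'(\zeta,s)}=-\int_0^s\bigl(2h'(\zeta,t)p'(h(\zeta,t),s-t)\zeta^{-1}\cdot\zeta+\dots\bigr)dt, \label{hpp}
\end{equation}
i.e.\ an integral whose integrand is a polynomial in $h$, $h'$, $p'$, $p''$ evaluated along the flow. Since $p'$ and $p''$ are bounded on $\overline{\mathbb D}\times[0,T)$ by hypothesis, and since $h(\zeta,t)$, $h'(\zeta,t)$ stay within a bounded region with $h'(\zeta,t)=1+O(t)$ for $t$ small (again from Lemma 1 and the $\log w'$ formula), the integral over $[0,s]$ is $O(s)$. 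Combining with the corresponding $O(t)$ bound for $e^{i\varphi}f'/f-1$ already obtained, we get $\bigl|\,\partial_\varphi\arg(e^{i\varphi}f'(e^{i\varphi},t)/f(e^{i\varphi},t))\,\bigr|<c\,t$, and feeding this into the formula from the first step yields $|\delta''_t(\psi_t)|<c_4 t$ for $t>0$ small enough; taking $s=t$ and recalling $s$ was arbitrary finishes the argument.

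The main obstacle I anticipate is purely bookkeeping: the change of variables between $\varphi$ (the argument of the preimage on $\mathbb T$) and $\psi_t$ (the polar angle on $\Gamma(t)$) introduces factors $d\varphi/d\psi_t$ and $d^2\varphi/d\psi_t^2$, and the latter must itself be shown to be $O(t)$ before the second-derivative estimate closes. This is handled by differentiating $\psi_t=\arg f(e^{i\varphi},t)$ twice and solving for $d^2\varphi/d\psi_t^2$ in terms of lower-order data already known to be $O(t)$, but it requires care to make sure no $1/t$ blow-up sneaks in — it does not, because the leading term of $f(\cdot,t)$ is the identity and all corrections carry a factor $t$. Everything else is a direct repetition of the estimates in Lemmas 3--5 with one extra derivative, legitimized by the $C^3$-regularity from Lemma 2 and the boundedness of $p''$.
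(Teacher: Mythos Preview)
Your proposal is correct and follows essentially the same route as the paper: the paper writes out the explicit formula
\[
\delta''_t(\psi_t)=-|f|\left(1+2\tan^2\arg\tfrac{zf'}{f}-\frac{(1+\tan^2\arg(zf'/f))\,\re(1+zf''/f')}{\re(zf'/f)}\right),\quad z=e^{i\varphi},
\]
and then uses the same integral representations for $h'$ and $h''$ along the flow to get $\re(zf'/f)=1+O(t)$ and $\re(zf''/f')=O(t)$, exactly the estimates your second step targets. Two small remarks: the quantity $\partial_\varphi\arg(e^{i\varphi}f'/f)$ is $\re\bigl(1+e^{i\varphi}f''/f'-e^{i\varphi}f'/f\bigr)$, not $\im$ (since $\partial_\varphi\log=i z\,\partial_z$), and the second change-of-variables derivative $d^2\varphi/d\psi_t^2$ never actually enters---one factor of $d\varphi/d\psi_t=1/\re(zf'/f)$ suffices, which is precisely the $\re(zf'/f)$ appearing in the paper's denominator.
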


\begin{proof}
Elementary calculations give the formula $$\delta''_t(\psi_t)=-|f(e^{i\varphi},t)|\left(1+2\tan^2
\arg\frac{e^{i\varphi}f'(e^{i\varphi},t)}{f(e^{i\varphi},t)}\right.-$$
$$\left.\frac{(1+\tan^2\arg(e^{i\varphi}f'(e^{i\varphi},t)/f(e^{i\varphi},t)))
\re(1+e^{i\varphi}f''(e^{i\varphi},t)/f'(e^{i\varphi},t))}
{\re(e^{i\varphi}f'(e^{i\varphi},t)/f(e^{i\varphi},t))}\right),$$ where $f(\cdot,t)$, $f'(\cdot,t)$
and $f''(\cdot,t)$ are extended continuously onto $\overline{\mathbb D}$ and $\arg
f(e^{i\varphi},t)=\psi_t$. So it is sufficient to find linear estimates for
$$\re\frac{e^{i\varphi}f'(e^{i\varphi},t)}{f(e^{i\varphi},t)}\;\;\;\text{and}\;\;\;
\re\left(1+\frac{e^{i\varphi}f''(e^{i\varphi},t)}{f'(e^{i\varphi},t)}\right).$$

As in the above Lemmas, for solutions $g(w,t)$ to (\ref{Loe2}) and
$h(\zeta,t)=g(g^{-1}(\zeta,s),s-t)$, $0\leq t\leq s$, $\zeta\in\mathbb D$, we obtain the formulas
$$h'(\zeta,\sigma)=\exp\left\{-\int_0^{\sigma}
(p(h(\zeta,t),s-t)+h(\zeta,t)p'(h(\zeta,t),s-t))dt\right\},\;\;\;0\leq\sigma\leq s,$$
$$\re\frac{\zeta
f'(\zeta,s)}{f(\zeta,s)}=1-\re\int_0^s\zeta h'(\zeta,t)p'(h(\zeta,t),s-t)dt,\;\;\; \zeta\in\mathbb
D,$$ $$\re\frac{\zeta f''(\zeta,s)}{f'(\zeta,s)}=\re\left(\frac{\zeta
f'(\zeta,s)}{f(\zeta,s)}-1-\right.$$ $$\left.\int_0^s(\zeta h'(\zeta,t)p'(h(\zeta,t),s-t)+\zeta
h(\zeta,t)h'(\zeta,t)p''(h(\zeta,t),s-t))dt\right),\;\;\;\zeta\in\mathbb D.$$

This implies that $f'(z,t)$ is bounded and, for $t>0$ small enough,
$$1-c_5t<\re\frac{zf'(z,t)}{f(z,t)}<1+c_5t,\;\;\;
\left|\re\frac{zf''(z,t)}{f'(z,t)}\right|<c_6t,\;\;\;z\in\overline{\mathbb D},$$ which completes
the proof.
\end{proof}

\section{Proof of Theorem 1}

{\it Proof of Theorem 1.} For $s\in[0,T)$, set $h(\zeta,t):=g(g^{-1}(\zeta,s),s-t)$,
$\zeta\in\overline{\mathbb D}$, $0\leq t\leq s$, where $g(w,t)$ is a solution to (\ref{Loe2}).
Equation (\ref{Loe5}) gives after integration that
$$h(\zeta,\sigma)=z\exp\left\{-\int_0^{\sigma}p(h(\zeta,t),s-t)dt\right\},\;\;\;0\leq\sigma
\leq s,\;\;\;\zeta\in\mathbb T,$$ and Lemma 3 allows us to write the polar equation of the curve
$\Gamma(s)=\partial f(\mathbb D,s)$ in the form $r_s=1-\delta_s(\psi_s)$, $0\leq\psi_s\leq2\pi$,
where
\begin{equation}
\delta_s(\psi_s)=1-|f(e^{i\varphi(\psi_s)},s)|=
1-\exp\left\{-\re\int_0^sp(h(e^{i\varphi(\psi_t)},t),s-t)dt\right\} \label{Th1}
\end{equation}
and $\varphi(\psi_s)$ is the inverse function for
\begin{equation}
\psi_s=\varphi-\im\int_0^sp(h(e^{i\varphi},t),s-t)dt,\;\;\; 0\leq\varphi\leq2\pi. \label{Th2}
\end{equation}

Deduce from (\ref{Th1}) that $$\delta_t(\psi_t)=1-\re p(h(e^{i\varphi},0),0)t+O(t^2)=1-\re
p(e^{i\varphi},0)t+O(t^2),\;\;\;t\to+0.$$ Similarly, expansion (\ref{Th2}) gives that
$$\psi_t=\varphi-\im p(h(e^{i\varphi},0),0)t+O(t^2)=\varphi-\im
p(e^{i\varphi},0)t+O(t^2),\;\;\;t\to+0.$$ Equation (\ref{Loe1}) presents the asymptotic expansion
\begin{equation}
f(z,t)=z-zp(z,0)t+O(t^2),\;\;\;t\to+0. \label{Th3}
\end{equation}

Let $F^{-1}(\cdot,\tau(t))\circ f(\cdot,t)$ determine a conformal welding under the conditions of
Theorem 1. According to (\ref{Loe1}), $f(z,t)=e^{-t}z+\dots$, $|z|<1$, and according to
(\ref{Loe3}), $F(z,\tau(t))=e^{-\tau}z+\dots$, $|z|>1$. The Lebedev theorem \cite{Lebedev1}, see
also [7, p. 223], states, that $\tau\leq t$ with the equality sign only in the case when $f(\mathbb
D,t)$ is a disk centered at the origin.

Denote $p^*(z,0)=\overline{p(\overline z,0)}$. Equation (\ref{Pr2}) of Theorem A establishes the
following relations
\begin{equation}
zF\left({1\over z},\tau\right)=1-\frac{1}{2\pi}\int_0^{2\pi}\delta_{\tau}(\psi_{\tau})
\frac{e^{-i\psi}+z}{e^{-i\psi}-z}d\psi+O(\tau^2)= \label{Th4}
\end{equation}
$$1-\left(\frac{1}{2\pi}\int_0^{2\pi}\re p(e^{i\psi},0)
\frac{e^{-i\psi}+z}{e^{-i\psi}-z}d\psi\right)t+O(\tau^2)=$$
$$1-\left(\frac{1}{2\pi}\int_0^{2\pi}\re p^*(e^{-i\psi},0)
\frac{e^{-i\psi}+z}{e^{-i\psi}-z}d\psi\right)t+O(\tau^2)=$$
$$1-p^*(z,0)\tau+O(\tau^2),\;\;\;|z|<1,\;\;\;\tau\to+0.$$

Compare expansion (\ref{Th4}) with (\ref{Sir}) and (\ref{asy}) and observe that
\begin{equation}
p^*(z,0)=q\left({1\over z},0\right),\;\;\;|z|<1. \label{Th5}
\end{equation}

By Lemma 1, $\Gamma(t)=f(\mathbb T,t)=F(\mathbb T,\tau(t))$ are $C^3$-curves. Hence the functions
$q(\cdot,t)$ satisfying (\ref{Loe3}) are $C^2$ extended onto the closure $\overline{\mathbb D^*}$
of $\mathbb D^*$, $q(z,t)$, $q'(z,t)$ and $q''(z,t)$ are bounded in $\overline{\mathbb
D^*}\times[0,t_0]$, $0<t_0<T$.

The welding condition $f(\mathbb T,t)=F(\mathbb T,\tau(t))$, $0\leq t\leq t_0$, gives a source to
obtain an asymptotic representation for $\tau(t)$. Formulas (\ref{asy1}), (\ref{asy}), (\ref{Th3}),
(\ref{Th4}) and (\ref{Th5}) indicate that $\tau(0)=0$ and $\tau(t)=t+o(t)$, $t\to+0$, provided
$\tau(t)$ is differentiable at $t=0$. To confirm differentiability  of $\tau(t)$, write the welding
condition
\begin{equation}
f(e^{i\varphi(\psi_t)},t)=F(e^{i\tilde{\varphi}(\tilde{\psi}_{\tau(t)})},\tau(t)), \label{wel}
\end{equation}
where $\psi_t=\arg f(e^{i\varphi(\psi_t)},t)=\tilde{\psi}_{\tau(t)}=\arg
F(e^{i\tilde{\varphi}(\tilde{\psi}_{\tau(t)})},\tau(t))$.

Equation (\ref{wel}) determines an implicit function $\tau(t)$. Using (\ref{asy1}), (\ref{asy}),
(\ref{Th3}), (\ref{Th4}), (\ref{Th5}), differentiate (\ref{wel}) with respect to $t$ at $t=0$ and
find that $\tau'(0)=1$.

Now we are in a position to prove the final asymptotic relation stated in Theorem 1. Representation
(\ref{Th3}) and also equation (\ref{Th2}) establish a correspondence between points $e^{i\varphi}$
on the unit circle and $f(e^{i\varphi},t)=(1-\delta_t(\psi_t))e^{i\psi_t}$ on the boundary of
$\Omega(t)$, $$\psi_t=\varphi-\im p(e^{i\varphi},0)t+O(t^2),\;\;\;t\to+0.$$ In the same way,
representation (\ref{Th4}) together with (\ref{Th5}) establishes a correspondence between points
$e^{i\tilde{\varphi}}$ on the unit circle and $F(e^{i\tilde{\varphi}},\tau(t))=
(1-\delta_{\tau(t)}(\tilde{\psi}_{\tau(t)}))e^{i\tilde{\psi}_{\tau(t)}}$ on
$\partial\Omega(\tau(t))$, $$\tilde{\psi}_{\tau(t)}=\tilde{\varphi}-\im
q(e^{i\tilde{\varphi}},0)\tau+O(\tau^2)=\tilde{\varphi}+\im
p(e^{i\tilde{\varphi}},0)\tau+O(\tau^2),\;\;\;\tau\to+0,$$ where $\tau(t)=t+o(t)$, $t\to+0$.

Equating $\psi_t=\tilde{\psi}_{\tau(t)}$ we obtain the asymptotic conformal welding (\ref{main})
for domains described in Theorem 1 which completes the proof.

\begin{remark}
It follows from the proof of Theorem 1 that if the Carath\'eodory functions $p(z,t)$, $|z|<1$, in
(\ref{Loe1}) and $q(z,t)$, $|z|>1$, in (\ref{Loe3}) are differentiable in $t$ at $t=0$, then
$\tau(t)$ is twice differentiable at $t=0$, and the term $o(t)$ in formula (\ref{main}) of Theorem
1 can be substituted by $O(t^2)$, $t\to+0$.
\end{remark}

\end{document}